\noindent \textbf{2020 Mathematics Subject Classification:} 34N05, 39A13, 47H10, 34B15.\\[0.5em]
\newtheorem{theorem}{Theorem}[section]
\newtheorem{rem}{Remark}[section]
\newtheorem{pro}{Proposition}[section]
\newtheorem{lemma}[theorem]{Lemma}
\newtheorem{definition}[theorem]{Definition}
\numberwithin{equation}{section}
\DeclareMathOperator{\rd}{rd}
\title{ Nonlinear Higher-Order Dynamic Equation with Polynomial Growth and Mixed Boundary Conditions }
\author{ Shalmali Bandyopadhyay$^\dagger$, \protect\footnote{$^\dagger$Department of Mathematics and Statistics, University of Tennessee at Martin, Tennessee, USA, Email: sbandyo5@utm.edu} Svetlin G. Georgiev$^*$ \protect\footnote{$^*$Department of Mathematics, Sorbonne University, Paris, France, Email: svetlingeorgiev1@gmail.com}}
\date{}
\begin{document}

\begin{abstract}
This paper investigates the existence of solutions for a class of nonlinear higher-order dynamic equations subject to mixed boundary conditions. We consider boundary value problems in which the nonlinear reaction functions satisfy polynomial growth conditions both in the interior of the domain and on the boundary. Our analysis employs a systematic approach based on fixed-point theorems for expansive mappings combined with completely continuous operators to establish stronger existence results. Under appropriate growth conditions on the nonlinear terms, we first prove the existence of at least one classical solution, which is not guaranteed to be nonnegative. We then strengthen our hypotheses to establish the existence of at least three nonnegative solutions. The theoretical framework relies on cone theory and carefully constructed open bounded subsets within function spaces equipped with appropriate norms. Our methodology provides a unified approach to multiple solution problems for higher-order dynamic equations.
\end{abstract}

\maketitle

\section{Introduction}


The study of boundary value problems for dynamic equations has attracted considerable attention since the foundational work of Hilger \cite{H}, who introduced the theory of time scales to unify differential and difference equations. The existence and multiplicity of solutions for such problems have been extensively investigated using various analytical techniques, particularly fixed-point theory and topological methods.

For second-order boundary value problems on time scales, significant progress has been made in establishing existence results for positive solutions. Anderson et al. \cite{A1,A2} developed criteria for the existence of positive solutions using cone theory and fixed-point theorems. Similarly, Henderson et al. \cite{H1,H2} studied multipoint boundary value problems, establishing existence and uniqueness results through solution matching techniques and various fixed-point arguments. The work on triple solutions to boundary value problems on time scales \cite{MT} demonstrated the effectiveness of cone-theoretic methods in obtaining multiple solution results.

The application of fixed-point theory to boundary value problems on time scales has evolved significantly, with researchers employing increasingly sophisticated techniques. Classical approaches have relied heavily on Schauder's fixed-point theorem and its variants for completely continuous operators \cite{S}. More recent developments have incorporated Krasnoselskii's fixed-point theorem \cite{K}, which combines the advantages of both contraction mapping principles and Schauder's theorem by considering the sum of contractive and compact operators.

Recent advances in fixed-point theory have focused on expansive mappings, which represent a natural counterpart to contractive mappings. Wang, Li, Gao, and Iseki \cite{WLGI} established fundamental results for expansion mappings in metric spaces, while subsequent work by Rhoades \cite{R}, Taniguchi \cite{T}, and others extended these results to pairs of mappings. More recently, Ahmed \cite{AH} developed common fixed-point theorems for expansive mappings in 2-metric spaces using compatibility conditions, and this work has been further generalized to G-metric spaces by removing restrictive continuity requirements \cite{SPB}.

The combination of expansive and completely continuous operators has emerged as a powerful approach in fixed-point theory. Recent work by Djebali and Mebarki \cite{3} on fixed-point index theory for perturbations of expansive mappings by k-set contractions has provided new tools for establishing existence results. Similarly, developments in cone theory and the use of carefully constructed open bounded subsets have enabled researchers to prove the existence of multiple solutions for various classes of boundary value problems \cite{LP,GP}.

For higher-order differential equations, the literature is more limited. Most existing results focus on specific cases such as third-order or fourth-order equations with particular boundary conditions. Zhou and Ma \cite{ZM} studied third-order boundary value problems with p-Laplacian operators, while Anderson \cite{A3} examined higher-order multipoint problems using monotone iterative techniques. However, a systematic treatment of general higher-order dynamic equations with polynomial growth conditions and mixed boundary conditions remains largely unexplored.

The existing literature reveals several gaps that our work addresses. First, while there are numerous results for second-order boundary value problems on time scales, the theory for higher-order problems is less developed, particularly for equations with polynomial growth conditions both in the interior and on the boundary. Second, although fixed-point theorems for expansive mappings and completely continuous operators have been studied separately, their systematic combination for establishing existence results (from one to multiple solutions) has not been fully exploited. Finally, most existing work provides either single solution existence or specific multiple solution results, but a unified approach that demonstrates the systematic progression from one to three solutions under increasingly restrictive hypotheses is absent from the literature.

Our contribution in this manuscript addresses these gaps by developing a comprehensive framework for higher-order nonlinear dynamic equations with mixed boundary conditions. In particular, we investigate the existence of solutions for a nonlinear boundary value problem with mixed boundary conditions, where nonlinear reaction functions both in the interior and on the boundary of the domain satisfy polynomial growth. Prior to introducing our problem, we recall some standard definitions and notations of time scale calculus that are relevant for our purposes. For further details on time scale calculus, we refer readers to \cite{BP}.

Let $\mathbb{T}$ be a time scale. Let $0, T \in \mathbb{T}$ with $T > 1$, and define  $J := [0, T]_{\mathbb{T}}$, which is nonempty by the fact $T>1$.

\begin{definition}
Let $t \in J.$ We define the forward jump operator $\sigma : J \rightarrow J$ by $$\sigma(t) := \inf\{s \in J : s > t\}.$$
We define the backward jump operator $\rho : J \rightarrow J$ by $$\rho(t) := \sup\{s \in J : s < t\}.$$
In this definition, we specify $\inf \emptyset = \sup J = T$ and $\sup \emptyset = \inf J = 0.$
\end{definition}
\begin{definition}
For the purpose of defining differentiation, we need to specify the time scale $$J^k = J - \{ T \}.$$
For the purpose of defining our boundary value problem, we need to specify the time scale $$J^0 = J - \{ 0,T \}.$$
\end{definition}

\begin{definition}
	Assume $f: J \rightarrow \mathbb{R}$ is a function and let $t \in J^k.$  Then we define $f^{\Delta}(t)$ to be the number (provided it exists) with the property that given any $\varepsilon > 0,$ there is a neighborhood $U$ of $t$ such that $$\left|[f(\sigma(t)) - f(s)] - f^{\Delta}(t)[\sigma(t) - s]\right| \leq \varepsilon |\sigma(t) - s|,$$ for all $s \in U.$  We call $f^{\Delta}(t)$ the delta (or Hilger) derivative of $f$ at $t.$  We also make note that $f^{\Delta \Delta}(t) = (f^{\Delta})^{\Delta}(t).$
\end{definition}
Next, we introduce the following nonlinear boundary value problem that is under consideration:
\begin{align}
\label{eq:main}
(-1)^n\Delta^nu(\rho(t)) + f(t, u(\rho(t)), \ldots, \Delta^{n-1}u(\rho(t))) &= 0, \quad t \in (0, T),\notag \\
\Delta^{n-1}u(0) &= g_{n-1}(u(0)), \notag\\
\Delta^{n-2}u(\sigma(0)) &= g_{n-2}(u(\sigma(0))), \notag\\
&\vdots \notag\\
\Delta u(\sigma^{n-2}(0)) &= g_1(u(\sigma^{n-2}(0))), \notag\\
u(\sigma^{n-1}(0)) &= u(T) = 0, 
\end{align}
where $n \in \mathbb{N}$, and $\sigma^{n-1}(0) \leq T$. Furthermore, we assume the following hypotheses.
\begin{description}
\item[(A1)]$f \in C(J \times \mathbb{R}^n)$ satisfies
\begin{align}
|f(t, x_1, x_2, \ldots, x_n)| \leq b_0(t) + \sum_{j=1}^{n} b_j(t)|x_j|^{k_j}, \quad t \in J, \label{eq:f-bound}
\end{align}
where $k_j \geq 0$ for $j \in \{1, \ldots, n\}$, and $b_l \in C(J)$ with $0 \leq b_l \leq B$ on $J$, for $l \in \{0, 1, \ldots, n\}$, and  $g_j \in C(\mathbb{R})$, $j \in \{1, \ldots, n-1\}$, satisfy
\begin{align}
|g_j(x)| \leq a_{0j} + \sum_{k=1}^{p_j} a_{kj}|x|^{l_k}, \quad j \in \{1, \ldots, n\}, \label{eq:g-bound}
\end{align}
where $0 \leq a_{0j}, a_{kj} \leq B$ with $B$ being a positive constant for $j \in \{1, \ldots, n-1\}$ and $l_k \geq 0$ for $k \in \{1, \ldots, p_j\}$. 
Define
\begin{align}
B_1 := \max\left(2B + nB^{n+1}, \max_{j\in\{1,\ldots,n\}}\left(2B + \sum_{k=1}^{p_j} B^{1+l_k}\right)\right). \label{eq:B1-def}
\end{align}
\item[(A2)] Let  $r$, $L$, $R$ be positive constants satisfying
\begin{align}
r < L < R. \label{eq:A2-cond}
\end{align}
\end{description}
\begin{definition}
We introduce the following function spaces for our purposes. Let $X_1 = C^n_{\rd}(J)$ be the space endowed with the norm
\begin{align}
\|u\|_1 = \max_{j\in\{0,1,\ldots,n\}} \max_{t\in J} |\Delta^j u(t)|, \notag
\end{align}
provided it exists. Then we define the product space $X := (X_1)^{n+2}$ with the norm
\begin{align}
\|v\| = \max_{j\in\{1,2,\ldots,n+2\}} \|v_j\|_1, \quad v = (v_1, \ldots, v_n, v_{n+1}, v_{n+2}). \notag
\end{align}    
\end{definition}
Note that, $\langle X_1,\|\cdot\|_1\rangle$ is a Banach space and so is $\langle X, \|\cdot \|\rangle $. 

Having introduced the necessary definitions and notations, we now state our main results on the existence of non-negative solutions to Problem \eqref{eq:main}.
\begin{theorem} \label{thm:one-sol}
Given Hypothesis {\bf(A1)},  Problem \eqref{eq:main} admits at least one classical solution in $X$.
\end{theorem}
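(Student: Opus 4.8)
The plan is to reformulate \eqref{eq:main} as a single fixed-point equation $u=\mathcal{A}u$ and then apply the theory of perturbations of expansive mappings by completely continuous operators developed in \cite{3}. First I would integrate the dynamic equation. Writing $(-1)^n\Delta^n u(\rho(t))=-f(t,u(\rho(t)),\ldots,\Delta^{n-1}u(\rho(t)))$ and performing $n$ successive delta-integrations, I would absorb the mixed data $\Delta^{n-1}u(0)=g_{n-1}(u(0)),\ \ldots,\ \Delta u(\sigma^{n-2}(0))=g_1(u(\sigma^{n-2}(0)))$ together with $u(\sigma^{n-1}(0))=u(T)=0$ into the constants of integration. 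This yields an integral representation whose right-hand side defines $\mathcal{A}$, acting on $X$ with the extra components of the product space carrying $\Delta u,\ldots,\Delta^{n-1}u$ and the auxiliary term needed for the expansive construction; a function $u$ solves \eqref{eq:main} exactly when the corresponding $v\in X$ is a fixed point of $\mathcal{A}$.

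Second, I would prove that $\mathcal{A}$ is completely continuous on $X$. Continuity is inherited from $f\in C(J\times\mathbb{R}^n)$ and $g_j\in C(\mathbb{R})$ through the behavior of the iterated delta-integrals, while relative compactness follows from a time-scale Arzel\`a--Ascoli argument applied in each of the norms $\max_{t\in J}|\Delta^{j}\cdot(t)|$, $j=0,\ldots,n$: the iterated integrals send bounded sets to families that are uniformly bounded and equicontinuous, using that $J=[0,T]_{\mathbb{T}}$ is compact.

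Third, I would carry out the expansive/completely-continuous splitting. Fixing $\epsilon>0$, set $T_1 v=(1+\epsilon)v$ and $T_2 v=-\epsilon\mathcal{A}v$. Then $T_1$ is expansive with constant $1+\epsilon>1$, $T_2$ is completely continuous, $I-T_1=-\epsilon I$ is onto, and $(T_1+T_2)v=v$ holds precisely when $v=\mathcal{A}v$. The growth hypotheses {\bf(A1)}, via \eqref{eq:f-bound}, \eqref{eq:g-bound} and the explicit constant $B_1$ in \eqref{eq:B1-def}, supply the quantitative estimate on $\|\mathcal{A}v\|$ needed to select a bounded open set $\mathcal{U}\subset X$ with $0\in\mathcal{U}$ on whose boundary the admissibility (Leray--Schauder-type) condition of the fixed-point index theorem for $T_1+T_2$ in \cite{3} is satisfied. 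Nonvanishing of the index then furnishes a fixed point of $T_1+T_2$, hence a classical solution of \eqref{eq:main} in $X$; no cone or sign information is used, so the solution need not be nonnegative.

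The main obstacle I anticipate lies in two places. The first is deriving the correct integral kernel for this higher-order \emph{mixed} problem on a general time scale: the bookkeeping with $\sigma$, $\rho$ and the shifted base points $\sigma^j(0)$ is delicate, the resulting Green's-function-type representation is nonstandard, and one must check that it reproduces every prescribed boundary relation. The second is verifying the boundary condition of the fixed-point theorem, namely that no point of $\partial\mathcal{U}$ satisfies the scaled fixed-point relation, since this is exactly where the polynomial growth with unrestricted exponents $k_j$ and $l_k$ must be reconciled with the fixed constant $B_1$; controlling $\|\mathcal{A}v\|$ on $\partial\mathcal{U}$ is the crux of the argument and dictates how $\mathcal{U}$ must be chosen.
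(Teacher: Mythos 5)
Your proposal breaks down at its first step, and for a structural reason rather than a bookkeeping one. Problem \eqref{eq:main} imposes $n+1$ scalar boundary conditions on an $n$-th order equation: the $n-1$ derivative conditions $\Delta^{n-1}u(0)=g_{n-1}(u(0)),\ldots,\Delta u(\sigma^{n-2}(0))=g_1(u(\sigma^{n-2}(0)))$, plus the two conditions $u(\sigma^{n-1}(0))=0$ and $u(T)=0$. Integrating the equation $n$ times produces only $n$ constants of integration, so no Green's-function-type operator $\mathcal{A}$ can absorb all the boundary data; whichever $n$ conditions you use to fix the constants, fixed points of the resulting $\mathcal{A}$ will not in general satisfy the remaining one, and your claimed equivalence ``$u$ solves \eqref{eq:main} exactly when $v=\mathcal{A}v$'' is precisely what fails. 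The paper never inverts the differential operator. It instead forms the residual operator $S_1$ of \eqref{eq:operator} on the product space $X=(X_1)^{n+2}$, whose $n+2$ components are the equation residual together with all $n+1$ boundary residuals, integrates it against the Taylor monomial $h_n$ to define $S_2$ in \eqref{eq:S2-def}, and chooses the splitting $Tu=\eta u$, $Su=u-\eta u-\eta S_2u$, so that any fixed point $u^*$ of $T+S$ satisfies $S_2u^*\equiv 0$; differentiating $n+1$ times (Lemma \ref{lem:S2-sol}) then forces every component of $S_1u^*$ to vanish simultaneously. That is how the over-determined set of conditions is accommodated.

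Second, even granting an operator $\mathcal{A}$ with the stated equivalence, your verification of the Leray--Schauder-type boundary condition cannot go through under {\bf(A1)} alone. You propose to control $\|\mathcal{A}v\|$ on $\partial\mathcal{U}$ via the constant $B_1$; but the exponents $k_j$, $l_k$ are unrestricted, the coefficients $b_l,a_{kj}\le B$ carry no smallness assumption, and $\mathcal{A}$ is fixed by the problem data with no tunable parameter, so an estimate of the form $\|\mathcal{A}v\|\le\|v\|$ on a sphere of radius $\rho$ is false in general: superlinear growth defeats large $\rho$, and the forcing term $b_0$ defeats small $\rho$. The paper avoids norm control of this kind entirely. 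Its Lemmas \ref{lem:S1-bound} and \ref{lem:S2-bound} are used only for boundedness, hence (with equicontinuity) relative compactness of $(I-S)(U)$; the boundary condition of Proposition \ref{thm:fixed-point1} is verified \emph{pointwise}, using that $S_2u(0)=0$ for every $u$ together with the tailor-made set $U$ whose relevant boundary elements satisfy $u(0)>B/2>0$, so that evaluating the scaled relation $u=\lambda\eta(u+S_2u)$ at $t=0$ yields $\lambda\eta=1$, contradicting $\lambda<1/\eta$. That trick does not transfer to your $\mathcal{A}$: a solution operator vanishes identically only at base points where any candidate eigenfunction of $\mathcal{A}v=\mu v$ also vanishes, producing $0=0$ rather than a contradiction. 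So both the reformulation and the boundary-condition verification in your plan have genuine gaps.
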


\begin{theorem} \label{thm:three-sol}
Given Hypotheses {\bf(A1)} and {\bf(A2)}, Problem \eqref{eq:main} admits at least three nonnegative classical solutions $X$.
\end{theorem}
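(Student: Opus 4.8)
The plan is to reduce Problem~\eqref{eq:main} to a fixed-point equation in a cone of nonnegative functions and then invoke a Leggett--Williams type multiplicity result for the sum of an expansive map and a completely continuous map, following the fixed-point index theory of Djebali and Mebarki \cite{3}. First I would recover the integral representation already used for Theorem~\ref{thm:one-sol}: solving $\Delta^n u(\rho(t)) = (-1)^{n+1} f(t,u(\rho(t)),\ldots,\Delta^{n-1}u(\rho(t)))$ by $n$ successive $\Delta$-integrations and feeding the mixed data $\Delta^{n-1}u(0)=g_{n-1}(u(0)),\ldots,\Delta u(\sigma^{n-2}(0))=g_1(u(\sigma^{n-2}(0)))$ together with $u(\sigma^{n-1}(0))=u(T)=0$ into the successive constants of integration, which yields $u = Fu$, where $F$ combines an explicit Green's kernel $G$ for the $n$-th order delta operator with the boundary contributions $g_j$. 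Using continuity of $f$ and the $g_j$ together with the growth bounds \eqref{eq:f-bound}--\eqref{eq:g-bound}, the Arzel\`a--Ascoli theorem on time scales shows that $F$ sends norm-bounded sets of $X$ into relatively compact sets, so $F$ is completely continuous.

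Next I would write the equation $u=Fu$ as a fixed-point problem $u=Tu+Su$ with $T$ expansive and $S$ completely continuous, so that the index machinery of \cite{3} applies. Fixing $\varepsilon>0$ and taking $T$ to be a scaling of the identity (hence expansive with constant greater than $1$) and $S$ a completely continuous operator built as a scalar multiple of $F$ (hence a $0$-set contraction), one arranges the two fixed-point sets to coincide; the product structure $X=(X_1)^{n+2}$ is used to carry the components $u,\Delta u,\ldots,\Delta^{n-1}u$ and the auxiliary quantities needed to close the system. The essential structural input here is the sign of the Green's kernel: for the operator $(-1)^n\Delta^n$ under these mixed boundary conditions one expects $G\ge 0$, which lets me define the cone $P\subset X$ of componentwise nonnegative functions and derive a Harnack-type lower estimate $\min_{t\in I_0}u(t)\ge \gamma\|u\|_1$ over a fixed subinterval $I_0\subseteq J$, with $\gamma\in(0,1)$ depending only on $G$.

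With the cone and the lower estimate in hand, I would use the constants of \eqref{eq:A2-cond} to set up the three scales of the Leggett--Williams argument: a nonnegative concave functional $\alpha$ on $P$ (for instance $\alpha(u)=\min_{t\in I_0}u(\rho(t))$), the norm-sublevel sets $\{\,\|u\|<r\,\}$ and $\{\,\|u\|<R\,\}$, and the order region carved out by $\alpha(u)>L$. The growth bounds of \textbf{(A1)}, packaged through the constant $B_1$ of \eqref{eq:B1-def} and the norm of $G$, supply the quantitative estimates needed to verify the three boundary/index conditions: a compression inequality making $\|Fu\|<r$ on the inner sphere, an expansion-type inequality pushing $\alpha(Fu)>L$ on the middle level, and a containment inequality keeping $\|Fu\|<R$ on the outer sphere, all while $F$ preserves $P$. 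Applying the expansive-map analogue of the Leggett--Williams theorem from \cite{3} then produces three distinct fixed points in $P$, that is, three nonnegative classical solutions of \eqref{eq:main}.

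I expect the main obstacle to be twofold and concentrated in the quantitative step. First, establishing the nonnegativity and a usable lower bound for the $n$-th order Green's kernel $G$ on a general time scale, for arbitrary $n$ and under the constraint $\sigma^{n-1}(0)\le T$, is delicate, since the sign of iterated delta-Green's functions is not automatic and must be controlled through the boundary data. Second, the polynomial-growth estimates in \eqref{eq:f-bound}--\eqref{eq:g-bound} must be made to cooperate with the three thresholds $r<L<R$ simultaneously; reconciling the behavior encoded by the exponents $k_j$ and $l_k$ with the fixed constants $r$, $L$, $R$ is where Hypothesis~\textbf{(A2)} and the definition of $B_1$ must be exploited sharply, and this is the step most likely to demand additional smallness or largeness restrictions hidden in the admissible choices of $r$, $L$, and $R$.
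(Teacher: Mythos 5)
Your plan is not the paper's argument, and under the stated hypotheses it cannot be completed: the two quantitative pillars you rely on are simply not available from \textbf{(A1)} and \textbf{(A2)}. First, cone invariance: a Green's-function operator $F$ maps the cone of nonnegative functions into itself only if the kernel \emph{and} the data have a sign, but \textbf{(A1)} imposes only growth bounds \eqref{eq:f-bound}--\eqref{eq:g-bound} on $f$ and the $g_j$; these functions may change sign freely, so $F(P)\subset P$ fails in general, and with it the whole Leggett--Williams setup (your concave functional $\alpha$, the Harnack estimate, and the index computation all presuppose it). Second, the three-scale inequalities: Leggett--Williams-type conditions such as $\|Fu\|<r$ on the inner sphere and $\alpha(Fu)>L$ on the middle level require quantitative smallness/largeness relations tying $r$, $L$, $R$ to the nonlinearity (through $B$, $B_1$, and the kernel norm). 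Hypothesis \textbf{(A2)} asserts only $r<L<R$, with no such coupling; indeed $b_0$ may be as large as $B$ while $r$ is arbitrary, so the compression inequality is false in general. You flag both issues yourself as ``obstacles,'' but they are not technical delicacies to be smoothed over --- they are hypotheses your route needs and the theorem does not grant.

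The paper avoids all of this by never forming a Green's function and never asking the solution operator to preserve the cone. It keeps the residual operator $S_1$ from Section \ref{one-solution}, integrates it into $S_2$ as in \eqref{eq:S2-def}, and then applies Proposition \ref{thm:fixed-point3} to the pair $T_1v=(1+m)v$ (expansive) and $S_3v=-|S_2(v)|-mv$ (completely continuous) on the cone $P$ of nonnegative equicontinuous families, with $U_1,U_2,U_3$ the balls of radii $r<L<R$ in $P$. The point of the absolute value is that any fixed point $v=T_1v+S_3v$ in $P$ satisfies $|S_2(v)|\equiv 0$, hence $S_2(v)\equiv 0$, hence $S_1(v)\equiv 0$ by Lemma \ref{lem:S2-sol}, so $v$ solves \eqref{eq:main}; nonnegativity is automatic because the three fixed points produced by Proposition \ref{thm:fixed-point3} already lie in $P$. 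Conditions (i)--(iii) of that proposition are then verified by elementary norm estimates from Lemma \ref{lem:S2-bound} ($\|S_2u\|\le AB_1$) and a sign argument, which is exactly why only the inequality $r<L<R$ --- and no sign or smallness condition on $f$, $g_j$ --- is needed. If you want to salvage your route, you would have to add substantial hypotheses (positivity of $f$ and $g_j$, positivity of the $n$-th order kernel under these mixed boundary conditions, and explicit inequalities linking $r,L,R$ to the growth constants), which would prove a different, strictly weaker theorem.
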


The remainder of this paper is organized as follows. In Section \ref{prelim}, we present the preliminary results necessary for our analysis, including fundamental definitions from functional analysis and essential fixed-point theorems for expansive mappings that form the theoretical foundation of our approach. Section \ref{one-solution} establishes the existence of at least one classical solution, not necessarily nonnegative, to Problem \eqref{eq:main}.  Section \ref{three-solutions} further develops our existence result to guarantee at least three nonnegative solutions by systematically verifying the assumptions of our fixed-point framework. Finally, Section \ref{example} provides a concrete illustrative example that validates our theoretical results. Our methodology progresses systematically from establishing basic existence to proving multiple solution results, with each section building upon the previous theoretical developments while maintaining the unified framework of fixed-point theory.

\section{Preliminary Results}
\label{prelim}

In this section, we begin by recalling some standard definitions from functional analysis, followed by a review of recently established results in fixed point theory that are relevant for our purposes.

\begin{definition}[{\bf Completely Continuous Operator}]
Let $K: M \subset X \to X$ be an operator, where $X$ is a real Banach space. We say that $K$ is compact if $K(M)$ is contained in a compact subset of $X$. $K$ is called a completely continuous operator if it is continuous and maps any bounded set into a relatively compact set.
\end{definition}
\begin{definition}[{\bf Expansive Operator}]
Let $X$ and $Y$ be real Banach spaces. An operator $K: X \to Y$ is called expansive if there exists a constant $h > 1$ such that
\begin{align}
\|Kx - Ky\|_Y \geq h\|x - y\|_X \notag
\end{align}
for any $x, y \in X$.
\end{definition}
\begin{definition}[{\bf Cone}]
A closed, convex set $P$ in $X$ is said to be a cone if:
\begin{enumerate}
\item $\alpha x \in P$ for any $\alpha \geq 0$ and for any $x \in P$,
\item $x, -x \in P$ implies $x = 0$.
\end{enumerate}
\end{definition}
Proofs of our existence results rely heavily on fixed point theory. Below we recall some fixed point theory available in existing literature. 
\begin{pro}
\label{thm:fixed-point1}
\cite[Thm. \ 2.1]{4}, \cite[Thm. \ 2.1]{7} Let $E$ be a Banach space, $Y$ a closed, convex subset of $E$, and $U$ an open subset of $Y$ with $0 \in U$. Consider two operators $T$ and $S$, where
\begin{align}
Tx = \eta x, \quad x \in U, \notag
\end{align}
for $\eta > 1$, and $S: U \to E$ is continuous and satisfies:
\begin{itemize}
\item[(i)] $(I - S)(U)$ resides in a compact subset of $Y$,
\item[(ii)] the set $\{x \in \partial U : x = \lambda(I - S)x\}$ is empty for any $\lambda \in \left(0, \frac{1}{\eta}\right)$.
\end{itemize}
Then, $T + S$ has at least one fixed point in $U$.
\end{pro}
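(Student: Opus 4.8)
The plan is to recast the problem of finding a fixed point of the sum $T+S$ as a fixed point problem for a single completely continuous self-map of the closed convex set $Y$, to which a Leray--Schauder-type nonlinear alternative (equivalently, the fixed-point index for compact maps) can be applied. The starting observation is that, since $Tx=\eta x$ with $\eta>1$, a point $x\in\overline{U}$ satisfies $(T+S)x=x$ if and only if $\eta x + Sx = x$, that is, if and only if
\[
x=\frac{1}{\eta}(I-S)x.
\]
Thus, setting $G:=\tfrac{1}{\eta}(I-S)$, the fixed points of $T+S$ coincide exactly with the fixed points of $G$, and the entire argument can be carried out for $G$.

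First I would verify that $G$ is a completely continuous map of $\overline{U}$ into $Y$. Continuity is immediate from the continuity of $S$ and of scalar multiplication. For compactness, hypothesis (i) places $(I-S)(U)$ inside a compact subset $C\subseteq Y$, so $G(U)\subseteq\tfrac{1}{\eta}C$ is relatively compact; that $G$ actually takes values in $Y$ follows from convexity, since $0\in U\subseteq Y$ and $\tfrac{1}{\eta}c=\tfrac{1}{\eta}c+\bigl(1-\tfrac{1}{\eta}\bigr)0\in Y$ for every $c\in C$ (using $\tfrac{1}{\eta}\in(0,1)$). With $0\in U$, $U$ open in $Y$, and $G:\overline{U}\to Y$ completely continuous, the nonlinear alternative yields the dichotomy: either $G$ has a fixed point in $\overline{U}$, or there exist $x\in\partial U$ and $t\in(0,1)$ with $x=t\,Gx$. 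In the second case $x=\tfrac{t}{\eta}(I-S)x=\lambda(I-S)x$ with $\lambda=t/\eta\in\bigl(0,\tfrac{1}{\eta}\bigr)$, which is precisely the situation excluded by hypothesis (ii). Hence the second alternative cannot occur, and $G$---and therefore $T+S$---has a fixed point.

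The main obstacle, and the step demanding the most care, is justifying the nonlinear alternative in the restricted geometry of the closed convex set $Y$ rather than the ambient Banach space $E$: the operator $I-S$ need not map into $Y$ a priori, and it is hypothesis (i) that repairs this by forcing the range into $Y$. I would make this rigorous either by extending $G$ to a completely continuous self-map of $Y$ via the Dugundji extension theorem and invoking Schaefer's theorem, or---more robustly---through the fixed-point index on the retract $Y$, computing that the index of $G$ over $U$ equals $1$ by means of the compact homotopy $H(t,x)=t\,Gx$, whose admissibility on $\partial U$ (no fixed points there for $t\in[0,1)$) is guaranteed exactly by (ii) together with $0\in U$. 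A minor remaining point is the distinction between $\overline{U}$ and $U$ in the conclusion: the resulting fixed point $x^{*}$ satisfies $x^{*}=\lambda(I-S)x^{*}$ with the endpoint value $\lambda=1/\eta$, which (ii) does not exclude, so one either records the fixed point as lying in $\overline{U}$ or argues separately that it is interior.
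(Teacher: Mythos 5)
The paper offers no proof of this proposition at all; it is quoted from \cite{4} and \cite{7}, and your argument is essentially the one used in those references: the equivalence $(T+S)x=x \Leftrightarrow x=\frac{1}{\eta}(I-S)x$, the observation that $\frac{1}{\eta}(I-S)$ is a compact map into $Y$ (via convexity of $Y$ and $0\in Y$), and the Leray--Schauder nonlinear alternative on $Y$, with hypothesis (ii) ruling out the second alternative. So your proposal is correct and matches the intended proof. Your closing caveat is also well taken: as stated here (with $S$ defined only on the open set $U$, so that $(I-S)x$ is not even defined for $x\in\partial U$, and with the conclusion claiming a fixed point in $U$), the proposition is imprecise --- the cited versions take $S:\overline{U}\to E$, require $(I-S)(\overline{U})$ to lie in a compact subset of $Y$, and conclude only that the fixed point lies in $\overline{U}$, since (ii) does not exclude the endpoint value $\lambda=\frac{1}{\eta}$ on $\partial U$.
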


\begin{pro} \label{thm:fixed-point3}
\cite[Thm. 2.3]{3}, \cite[Thm. 2.3]{15}
Let $P$ be a cone in a Banach space $E$, $\Omega$ a subset of $P$, and $U_1$, $U_2$, and $U_3$ three open bounded subsets of $P$ such that $U_1 \subset U_2 \subset U_3$ and $0 \in U_1$. Assume that $T: \Omega \to E$ is an expansive mapping, $S: U_3 \to E$ is completely continuous, and $S(U_3) \subset (I - T)(\Omega)$.

Suppose that $(U_2 \setminus U_1) \cap \Omega \neq \emptyset$, $(U_3 \setminus U_2) \cap \Omega \neq \emptyset$, and there exist $w_0 \in P^*$ and $\eta > 0$ small enough such that:
\begin{itemize}
\item[(i)] $Sx \neq (I - T)(\lambda x)$ for all $\lambda \geq 1 + \eta$, $x \in \partial U_1$, and $\lambda x \in \Omega$,
\item[(ii)] $Sx \neq (I - T)(x - \lambda w_0)$ for all $\lambda \geq 0$ and $x \in \partial U_2 \cap (\Omega + \lambda w_0)$,
\item[(iii)] $Sx \neq (I - T)(\lambda x)$ for all $\lambda \geq 1 + \eta$, $x \in \partial U_3$, and $\lambda x \in \Omega$.
\end{itemize}

Then $T + S$ has at least three non-zero fixed points $x_1, x_2, x_3 \in P$ such that
\begin{align}
x_1 \in U_1 \cap \Omega, \quad x_2 \in (U_2 \setminus U_1) \cap \Omega, \quad \text{and} \quad x_3 \in (U_3 \setminus U_2) \cap \Omega. \notag
\end{align}
\end{pro}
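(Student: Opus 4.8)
The plan is to reduce the three-fixed-point assertion for the sum $T+S$ to three fixed-point-index computations for a single completely continuous self-map of the cone, and then to read off the three solutions from the additivity property of the index. The starting observation is that an expansive $T$ with constant $h>1$ makes $I-T$ expansive with constant $h-1$: for $x,y\in\Omega$,
\[
\|(I-T)x-(I-T)y\|\ge \|Tx-Ty\|-\|x-y\|\ge (h-1)\|x-y\|.
\]
Hence $I-T:\Omega\to (I-T)(\Omega)$ is a bijection whose inverse $(I-T)^{-1}$ is Lipschitz with constant $1/(h-1)$. Because $S(U_3)\subset (I-T)(\Omega)$, the operator $A:=(I-T)^{-1}\circ S$ is well defined on $\overline{U_3}$, takes values in $\Omega\subset P$, and is completely continuous (a Lipschitz map composed with a completely continuous one). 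The crucial equivalence is that $x=Ax$ iff $(I-T)x=Sx$ iff $x=Tx+Sx$, and every such $x$ lies in $\Omega$; thus fixed points of $T+S$ in $U_i\cap\Omega$ correspond exactly to fixed points of $A$ in $U_i$.

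Next I would translate (i)--(iii) into index conditions for $A$. Since the range of $A$ lies in $\Omega$, whenever $Ax=\lambda x$ we automatically have $\lambda x=Ax\in\Omega$, so the side conditions ``$\lambda x\in\Omega$'' and ``$x-\lambda w_0\in\Omega$'' are vacuous on the relevant solution set. Applying $(I-T)^{-1}$ to each hypothesis then gives: (i) and (iii) become $Ax\ne\lambda x$ for $\lambda\ge 1+\eta$ on $\partial U_1$ and $\partial U_3$, while (ii) becomes $x-Ax\ne\lambda w_0$ for $\lambda\ge 0$ on $\partial U_2$. Using the standard cone index lemmas — the radial homotopy $H(t,x)=tAx$ for the first type and the pushing homotopy $H(t,x)=Ax+t w_0$ for the second — these are designed to yield
\[
i(A,U_1,P)=1,\qquad i(A,U_2,P)=0,\qquad i(A,U_3,P)=1 .
\]
For the index-$0$ value one exploits that $A(\overline{U_2})$ is bounded and $w_0\ne 0$, so $Ax+t w_0$ leaves the bounded set $U_2$ for $t$ large, making $A+t w_0$ fixed-point free; hypothesis (ii) keeps the homotopy admissible on $\partial U_2$ for every intermediate $t\ge 0$.

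With no fixed points of $A$ on $\partial U_1$ or $\partial U_2$, additivity (excision) of the index gives
\[
i(A,U_2\setminus\overline{U_1},P)=0-1=-1,\qquad i(A,U_3\setminus\overline{U_2},P)=1-0=1 ,
\]
each nonzero, while $i(A,U_1,P)=1$. Nonvanishing of the index on each of $U_1$, $U_2\setminus\overline{U_1}$, and $U_3\setminus\overline{U_2}$ produces fixed points $x_1,x_2,x_3$ of $A$ there, hence fixed points of $T+S$ in $U_1\cap\Omega$, $(U_2\setminus U_1)\cap\Omega$, and $(U_3\setminus U_2)\cap\Omega$ as claimed; the nonemptiness assumptions $(U_2\setminus U_1)\cap\Omega\ne\emptyset$ and $(U_3\setminus U_2)\cap\Omega\ne\emptyset$ guarantee the intermediate regions genuinely meet $\Omega$, where the index is defined. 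The regions $U_2\setminus\overline{U_1}$ and $U_3\setminus\overline{U_2}$ exclude $0$ (since $0\in U_1$), so $x_2,x_3$ are automatically non-zero, while non-triviality of $x_1$ uses that $0$ is not a fixed point of $A$.

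I expect the main obstacle to be the index computations rather than the reduction. Two points need care. First, the thresholds $\lambda\ge 1+\eta$ in (i) and (iii) do not literally cover $\lambda=1$, so the naive radial homotopy is not immediately admissible and even the existence of the index on $\partial U_1,\partial U_3$ is not automatic; matching the hypotheses exactly requires the fixed-point index for perturbations of expansive maps constructed in \cite{3}, where the $\eta$-buffer supplied by the expansive constant $h-1$ is precisely what renders the homotopy $(I-T)^{-1}(tSx)$ legitimate and forces the value $1$. Second, throughout one must verify that every homotopy stays inside the domain $\Omega$ of $(I-T)^{-1}$, i.e.\ that the deformed arguments remain where $(I-T)^{-1}$ is defined; the side conditions built into (i)--(iii) are tailored for this, and checking admissibility of the pushing homotopy in (ii) while controlling $\|x\|$ on $\overline{U_2}$ is the most delicate step.
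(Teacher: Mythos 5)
You should know at the outset that the paper contains no proof of this proposition: it is imported verbatim from \cite{3} and \cite{15}, so there is no internal argument to compare against, and your attempt has to be measured against the proofs in those references. Your reduction is exactly their strategy, and the bookkeeping is right: $I-T$ is expansive with constant $h-1$ on $\Omega$, hence injective with $(h-1)^{-1}$-Lipschitz inverse on $(I-T)(\Omega)$; the hypothesis $S(U_3)\subset (I-T)(\Omega)$ makes $A:=(I-T)^{-1}\circ S$ a completely continuous map into $\Omega\subset P$; fixed points of $A$ in $U_i$ are precisely fixed points of $T+S$ in $U_i\cap\Omega$; and the side conditions ``$\lambda x\in\Omega$'', ``$x-\lambda w_0\in\Omega$'' are indeed automatic on the relevant solution sets, so (i)--(iii) translate into boundary conditions for $A$ alone. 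The intended index pattern $1,0,1$ with additivity is also the correct skeleton, and your reading of $w_0\in P^*$ as a nonzero cone element for the pushing homotopy $Ax+tw_0$ is the right one ($P^*=P\setminus\{0\}$; the paper's later gloss of $P^*$ as the dual space in Section 4 is itself a slip).

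The genuine gap is the one you flag but then wave away: the three index values are asserted, not established, and as literally stated the assertion can fail. Conditions (i) and (iii) exclude $Ax=\lambda x$ only for $\lambda\ge 1+\eta$, leaving the window $\lambda\in[1,1+\eta)$ open on $\partial U_1$ and $\partial U_3$; in particular $A$ may have a fixed point \emph{on} $\partial U_1$, in which case $i(A,U_1,P)$ is not even defined and no homotopy can ``force the value $1$''. Your proposed mechanism --- an ``$\eta$-buffer supplied by the expansive constant $h-1$'' --- is not available here: once you pass to $A$, the expansive constant has been absorbed into the Lipschitz bound for $(I-T)^{-1}$ and plays no further role; the whole difficulty lives in the compact map $A$. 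What \cite{15} (the ``extension of the Leray--Schauder boundary condition'') actually delivers is an alternative: either the index is $1$ or there is a fixed point on the boundary, so the honest localization is $x_1\in\overline{U_1}\cap\Omega$ and $x_3\in(\overline{U_3}\setminus U_2)\cap\Omega$ --- a concession the present paper itself makes in Section 4, where the solutions are located by an ``either $u_1\in\partial U_1\cap\Omega$ \dots\ or $u_1\in U_1\cap\Omega$'' dichotomy, at odds with the strict membership in the quoted statement. Two smaller repairs: your excision step needs $\overline{U_1}\subset U_2$ and $\overline{U_2}\subset U_3$ (the stated $U_1\subset U_2\subset U_3$ must be read this way, as in the sources); and to rule out fixed points on all of $\partial U_2$, combine (ii) at $\lambda=0$, which covers $\partial U_2\cap\Omega$, with injectivity of $I-T$, which forces any fixed point of $T+S$ to lie in $\Omega$ in the first place --- only then is the index over $U_2$ defined and the subtraction $0-1=-1$ legitimate.
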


\section{Existence of at Least One Solution}
\label{one-solution}

In this section, we establish that problem \eqref{eq:main} has at least one classical solution by employing Proposition \ref{thm:fixed-point1}. Note that this solution can be nonnegative, nonpositive or sign changing. We begin by defining appropriate operators corresponding to our problem \eqref{eq:main} to facilitate our analysis. Then we carefully verify all the assumptions outlined in Proposition \ref{thm:fixed-point1}.

For any $u \in X$, we define the operators:
\begin{align}
S_{11}u(t) &= (-1)^n\Delta^n u_1(\rho(t)) + f\left(t, u_1(\rho(t)), \ldots, \Delta^{n-1}u_1(\rho(t))\right), \notag\\
S_{12}u(t) &= \Delta^{n-1}u_1(0) - g_{n-1}(u_1(0)), \notag\\
S_{13}u(t) &= \Delta^{n-2}u_1(\sigma(0)) - g_{n-2}(u_1(\sigma(0))), \notag\\
&\vdots \notag\\
S_{1n}u(t) &= \Delta u_1(\sigma^{n-2}(0)) - g_1(u_1(\sigma^{n-2}(0))), \notag\\
S_{1,n+1}u(t) &= u_1(\sigma^{n-1}(0)), \notag\\
S_{1,n+2}u(t) &= u_1(T), \notag
\end{align}
Then,
\begin{equation}
\label{eq:operator}
 S_1u(t) = (S_{11}u(t), S_{12}u(t), \ldots, S_{1,n+2}u(t)) \mbox{ for }u(t) = (u_1(t), \ldots, u_{n+2}(t)), t \in J   
\end{equation}
is the operator corresponding to \eqref{eq:main}. Note that, if $u = (u_1, \ldots, u_{n+2})$ is a solution to the equation $S_1u(t) = 0$ for $t \in J$, then $u$ solves problem \eqref{eq:main}. 

In what follows, we prove a series of lemmas that facilitate the verification of all the assumptions of Proposition \ref{thm:fixed-point1} for the operator \ref{eq:operator}. 

\begin{lemma} \label{lem:S1-bound}
Given Hypothesis {\bf (A1)}, if $u \in X$ with $\|u\| \leq B$, then
\begin{align}
|S_{1j}u(t)| \leq B_1, \quad j \in \{1, \ldots, n+2\}, \quad t \in J. \notag
\end{align}
\end{lemma}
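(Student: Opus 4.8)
The plan is to bound each of the $n+2$ scalar components $S_{1j}u(t)$ separately, exploiting the fact that $\|u\|\le B$ forces every delta-derivative of the first coordinate to be controlled. The first step is to unpack the norm: by the definition of $\|\cdot\|$ on $X$, $\|u\|\le B$ implies $\|u_1\|_1\le B$, and hence
\[
|\Delta^i u_1(t)| \le B \quad\text{for all } i\in\{0,1,\ldots,n\} \text{ and all } t\in J.
\]
In particular, each argument $\Delta^{j-1}u_1(\rho(t))$ fed into $f$, and each delta-derivative and point evaluation of $u_1$ appearing in the boundary components, is bounded in absolute value by $B$.

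The second step handles the interior component $S_{11}$. By the triangle inequality,
\[
|S_{11}u(t)| \le |\Delta^n u_1(\rho(t))| + \big|f\big(t,u_1(\rho(t)),\ldots,\Delta^{n-1}u_1(\rho(t))\big)\big|,
\]
where the first term is $\le B$ and the second is controlled by the growth bound \eqref{eq:f-bound}. Since $b_0\le B$ and $b_j\le B$ on $J$, this yields $|S_{11}u(t)| \le B + B + \sum_{j=1}^n B\,|x_j|^{k_j}$ with each $|x_j|\le B$, and the claim for $j=1$ reduces to the elementary estimate $B\,|x_j|^{k_j}\le B^{1+k_j}\le B^{n+1}$, producing $2B + nB^{n+1}$, which is the first entry in the definition \eqref{eq:B1-def} of $B_1$.

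The third step treats the boundary components $S_{12},\ldots,S_{1n}$ in an identical fashion: each is a difference $\Delta^{m}u_1(\cdot)-g_{m}(u_1(\cdot))$ with $m\in\{1,\ldots,n-1\}$, so $|\Delta^m u_1|\le B$, and the growth bound \eqref{eq:g-bound} on $g_m$ together with $a_{0m},a_{km}\le B$ gives $2B+\sum_{k=1}^{p_m}B^{1+l_k}$, which is dominated by the second entry of \eqref{eq:B1-def}. The last two components $S_{1,n+1}$ and $S_{1,n+2}$ are mere point evaluations of $u_1$ and are therefore bounded by $B\le B_1$. Taking the maximum over $j\in\{1,\ldots,n+2\}$ finishes the argument.

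The only point beyond routine estimation — and hence the main thing to get right — is the monotonicity step $|x|^{k}\le B^{k}$ for $|x|\le B$, together with the comparisons $B^{1+k_j}\le B^{n+1}$. These require the normalization $B\ge 1$ (so that $t\mapsto t^{k}$ is increasing up to $B$ and larger exponents yield larger powers) and the interior exponents $k_j$ being at most $n$. I would either invoke the standing polynomial-growth hypotheses to guarantee these, or, if $B\ge1$ is not assumed elsewhere, replace $B$ by $\max(1,B)$ at the outset; this only enlarges the admissible set $\{\|u\|\le B\}$ and the constant $B_1$, and is therefore harmless for the downstream fixed-point arguments.
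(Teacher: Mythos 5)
Your proof is correct and follows essentially the same route as the paper's: unpack the norm to get $|\Delta^i u_1(t)|\le B$ for all $i\in\{0,\ldots,n\}$, apply the growth bounds \eqref{eq:f-bound} and \eqref{eq:g-bound} componentwise with the triangle inequality, and compare the resulting quantities $2B+nB^{n+1}$ and $2B+\sum_{k=1}^{p_j}B^{1+l_k}$ with the definition \eqref{eq:B1-def} of $B_1$. The caveat you flag is genuine: the paper's own proof silently performs the step $B\cdot B^{k_j}\le B^{n+1}$, which does require $B\ge 1$ and $k_j\le n$ even though neither is stated in {\bf (A1)}, so your explicit acknowledgment is more careful than the original --- just note that the $\max(1,B)$ replacement repairs only the $B\ge 1$ part, while $k_j\le n$ would still have to be added as a hypothesis (or $nB^{n+1}$ replaced by $\sum_{j=1}^{n}B^{1+k_j}$ in \eqref{eq:B1-def}).
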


\begin{proof}
From \eqref{eq:f-bound} and \eqref{eq:g-bound}, for $ t \in J$, we have
\begin{align}
\left|g_j\left(u\left(\sigma^{n-j-1}(0)\right)\right)\right| &\leq a_{0j} + \sum_{k=1}^{p_j} a_{kj}\left|u\left(\sigma^{n-j-1}(0)\right)\right|^{l_k} \notag
\leq B + \sum_{k=1}^{p_j} B^{1+l_k}, \quad j \in \{1, \ldots, n-1\}, \notag
\end{align}
and
\begin{align}
|f(t, u(\rho(t)), \ldots, \Delta^{n-1}u(\rho(t)))| &\leq b_0(t) + \sum_{j=1}^{n} b_j(t)|\Delta^{j-1}u(\rho(t))|^{k_j} \notag \leq B + \sum_{j=1}^{n} B^{n+1} = B + nB^{n+1}. \notag
\end{align}
Then it follows, for $j=1$,
\begin{align}
|S_{11}u(t)| &= |(-1)^n\Delta^n u_1(\rho(t)) + f(t, u_1(\rho(t)), \ldots, \Delta^{n-1}u_1(\rho(t)))| \notag\\
&\leq |\Delta^n u_1(\rho(t))| + |f(t, u_1(\rho(t)), \ldots, \Delta^{n-1}u_1(\rho(t)))| \notag\\
&\leq B + (B + nB^{n+1}) = 2B + nB^{n+1} \leq B_1, \notag
\end{align}
and for $j \in \{2, \ldots, n\}$, 
\begin{align}
|S_{1j}u(t)| &= |\Delta^{n-j+1}u_1(t) - g_{n-j+1}(u_1(0))| \notag\\
&\leq |\Delta^{n-j+1}u_1(0)| + |g_{n-j+1}(u_1(0))| \notag\\
&\leq B + \left(B + \sum_{k=1}^{p_j} B^{1+l_k}\right) = 2B + \sum_{k=1}^{p_j} B^{1+l_k} \leq B_1. \notag
\end{align}
Note that, for $j=n+1$ and $j=n+2$, using the fact that $\|u\| \le B$, we readily obtain,
\begin{align}
|S_{1,n+1}u(t)| &= |u_1(\sigma^{n-1}(0))| \leq B \leq B_1, \notag\\
|S_{1,n+2}u(t)| &= |u_1(T)| \leq B \leq B_1, \notag
\end{align}
Therefore, we can conclude that, $|S_{1j}u(t)| \leq B_1$ for all $j \in \{1, \ldots, n+2\}$ and $t \in J$.
\end{proof}

We now introduce the following operator, which plays a crucial role in the subsequent lemmas:
\begin{align}
S_2(u)(t) = \frac{A}{T^{n+1}} \int_0^t h_n(t, \sigma(s))S_1u(s)ds, \label{eq:S2-def}
\end{align}
where $u \in X$ and $A$ is a positive constant.

\begin{lemma} \label{lem:S2-bound}
Given Hypothesis {\bf (A1)}, if $u \in X$ with $\|u\| \leq B$, then
\begin{align}
\|S_2u\| \leq AB_1. \notag
\end{align}
\end{lemma}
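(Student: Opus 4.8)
The plan is to estimate the $X$-norm of $S_2u$ directly from its definition. Since $S_2$ acts componentwise on the vector $S_1u = (S_{11}u, \ldots, S_{1,n+2}u)$, and the norm on $X$ unwinds to $\|S_2u\| = \max_{1 \le j \le n+2}\max_{0 \le i \le n}\max_{t \in J}|\Delta^i (S_2u)_j(t)|$, it suffices to produce the uniform bound $|\Delta^i (S_2u)_j(t)| \le AB_1$ for every admissible $i$, $j$ and every $t \in J$. The single external input is Lemma \ref{lem:S1-bound}, which supplies $|S_{1j}u(s)| \le B_1$ on all of $J$ as soon as $\|u\| \le B$.

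First I would recall from \cite{BP} the properties of the Taylor monomials $h_k(\cdot,\cdot)$: the recursion $\Delta_t h_k(t,s) = h_{k-1}(t,s)$ for $k \ge 1$, the vanishing $h_k(s,s) = 0$ for $k \ge 1$, and the polynomial estimate $0 \le h_k(t,s) \le (t-s)^k/k!$ for $t \ge s$. The central step is then to delta-differentiate the integral defining $(S_2u)_j$ repeatedly in $t$. Applying the time-scale Leibniz rule $\Delta_t \int_0^t f(t,s)\,\Delta s = \int_0^t f^{\Delta_t}(t,s)\,\Delta s + f(\sigma(t),t)$ with $f(t,s) = h_{n-i+1}(t,\sigma(s))S_{1j}u(s)$, the boundary contribution is a multiple of $h_{n-i+1}(\sigma(t),\sigma(t))$, which vanishes whenever $i \le n$; iterating from $i=1$ up to $i=n$ yields the key identity
\begin{align}
\Delta^i (S_2u)_j(t) = \frac{A}{T^{n+1}}\int_0^t h_{n-i}(t,\sigma(s))\,S_{1j}u(s)\,\Delta s, \qquad 0 \le i \le n. \notag
\end{align}

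With this identity in hand the estimate is routine. Using Lemma \ref{lem:S1-bound}, the polynomial bound $h_{n-i}(t,\sigma(s)) \le (t-\sigma(s))^{n-i}/(n-i)! \le T^{n-i}$, and $\int_0^t \Delta s = t \le T$, one obtains
\begin{align}
|\Delta^i (S_2u)_j(t)| \le \frac{AB_1}{T^{n+1}}\,T^{n-i}\,T = AB_1\,T^{-i} \le AB_1, \notag
\end{align}
since $T > 1$ and $i \ge 0$. Taking the maximum over $i$, $j$ and $t$ gives $\|S_2u\| \le AB_1$; the factor $T^{-(n+1)}$ built into the definition of $S_2$ is precisely what absorbs the growth of the monomials together with the length of $J$.

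The main obstacle I anticipate is the justification of the iterated differentiation formula, namely confirming that each boundary term produced by the Leibniz rule is a multiple of some $h_m(\sigma(t),\sigma(t))$ with $m \ge 1$ and therefore vanishes as long as $i \le n$. This is exactly where the order-$n$ ceiling in the norm $\|\cdot\|_1$ is essential, since at $i = n+1$ the surviving term $h_0(\sigma(t),\sigma(t)) = 1$ would no longer vanish and the clean integral representation would fail. Everything after that identity, including the absorption of the constants by the normalization $A/T^{n+1}$, is elementary.
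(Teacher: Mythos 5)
Your proposal is correct and follows essentially the same route as the paper: both rest on the integral representation $\Delta^k S_2u(t) = \frac{A}{T^{n+1}}\int_0^t h_{n-k}(t,\sigma(s))S_1u(s)\,\Delta s$ for $0 \le k \le n$, the bound $|S_{1j}u| \le B_1$ from Lemma \ref{lem:S1-bound}, and the estimate $h_{n-k}(t,\sigma(s)) \le T^{n-k}$ with $t \le T$ so that the factor $T^{-(n+1)}$ absorbs everything. The only difference is that you justify the differentiation identity via the time-scale Leibniz rule and the vanishing of $h_m(\sigma(t),\sigma(t))$ for $m \ge 1$, whereas the paper simply asserts it; this is a welcome addition, not a different method.
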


\begin{proof}
Observe that, for any $k \in \{0, 1, \ldots, n\}$, 
\begin{align}
\Delta^k S_2u(t) = \frac{A}{T^{n+1}} \int_0^t h_{n-k}(t, \sigma(s))S_1u(s)\Delta s, \quad t \in J. \notag
\end{align}
Now, using Lemma \ref{lem:S1-bound}, we obtain
\begin{align}
|\Delta^k S_{2j}u(t)| &= \left|\frac{A}{T^{n+1}} \int_0^t h_{n-k}(t, \sigma(s))S_{1j}u(s)\Delta s\right| \notag\\
&\leq \frac{A}{T^{n+1}} \int_0^t h_{n-k}(t, \sigma(s))|S_{1j}u(s)|\Delta s \notag\\
&\leq \frac{A}{T^{n+1}} \cdot T^{n-k+1} \cdot B_1 \leq \frac{A}{T^{n+1}} \cdot T^{n+1} \cdot B_1 = AB_1 \notag
\end{align}
for all $t \in J$ and $j \in \{1, \ldots, n+1\}$ which implies $\|S_2u\| \leq AB_1$.
\end{proof}

\begin{lemma} \label{lem:S2-sol}
Given Hypothesis {\bf (A1)}, if $u \in X$ satisfies
\begin{align}
S_2(u)(t) = C, \quad t \in J, \label{eq:S2-const}
\end{align}
for some constant $C$, then $u$ is a solution to problem \eqref{eq:main}.
\end{lemma}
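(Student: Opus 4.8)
The plan is to exploit that $S_2u$ is, up to the positive constant $A/T^{n+1}$, an $(n+1)$-fold delta antiderivative of $S_1u$, so that forcing $S_2u$ to be constant must annihilate $S_1u$ itself; then I would read off from $S_1u\equiv 0$ that every equation and boundary condition in \eqref{eq:main} holds.

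First I would record the differentiation identity already established in the proof of Lemma \ref{lem:S2-bound}, namely
\begin{align}
\Delta^k S_2u(t) = \frac{A}{T^{n+1}}\int_0^t h_{n-k}(t,\sigma(s))\,S_1u(s)\,\Delta s, \qquad k\in\{0,1,\ldots,n\},\ t\in J. \notag
\end{align}
Here the base case $k=0$ is the definition \eqref{eq:S2-def}, and the inductive step rests on the time-scale Leibniz rule for delta-differentiating an integral with variable upper limit, $\Delta_t\int_0^t F(t,s)\,\Delta s = \int_0^t F^\Delta(t,s)\,\Delta s + F(\sigma(t),t)$, combined with $\Delta_t h_m(t,\sigma(s)) = h_{m-1}(t,\sigma(s))$ and the vanishing of the boundary term, since $h_m(\sigma(t),\sigma(t)) = 0$ for every $m\geq 1$. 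Because $S_1u$ is rd-continuous — it is assembled from the continuous data $f$ and $g_j$ evaluated along the $C^n_{\rd}$ components of $u$ — all of these integrals and partial delta derivatives are legitimate.

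Next I would differentiate one index further. Taking $k=n$ and using $h_0\equiv 1$ gives $\Delta^n S_2u(t) = \frac{A}{T^{n+1}}\int_0^t S_1u(s)\,\Delta s$, and one more application of the fundamental theorem of calculus on time scales yields $\Delta^{n+1}S_2u(t) = \frac{A}{T^{n+1}}S_1u(t)$. Now I invoke the hypothesis \eqref{eq:S2-const}: since $S_2u\equiv C$ is constant on $J$, each of its delta derivatives vanishes identically, so in particular $\Delta^{n+1}S_2u\equiv 0$. As $A>0$ and $T>0$, this forces $S_1u(t)=0$ for all $t\in J$, componentwise in all $n+2$ coordinates.

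Finally I would translate $S_1u\equiv 0$ back into Problem \eqref{eq:main}. Vanishing of the first coordinate $S_{11}u$ is exactly the dynamic equation; vanishing of $S_{12}u,\ldots,S_{1n}u$ reproduces the mixed boundary conditions $\Delta^{n-1}u(0)=g_{n-1}(u(0)),\ldots,\Delta u(\sigma^{n-2}(0))=g_1(u(\sigma^{n-2}(0)))$; and vanishing of $S_{1,n+1}u$ and $S_{1,n+2}u$ gives $u_1(\sigma^{n-1}(0))=u_1(T)=0$. This is precisely the equivalence noted right after \eqref{eq:operator}, so $u$ solves \eqref{eq:main}. I expect the only delicate point to be the bookkeeping in the differentiation identity: one must confirm that the Leibniz rule genuinely applies (rd-continuity of the integrand and of its partial delta derivative) and that every boundary term $h_m(\sigma(t),\sigma(t))$ with $m\geq 1$ really drops out, since a single surviving boundary term would spoil the clean telescoping to $\Delta^{n+1}S_2u = \frac{A}{T^{n+1}}S_1u$. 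Once that identity is secured, the conclusion is immediate.
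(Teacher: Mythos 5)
Your proposal is correct and follows essentially the same route as the paper: the paper's proof simply delta-differentiates the identity $S_2u \equiv C$ a total of $n+1$ times to obtain $\frac{A}{T^{n+1}}S_1u(t)=0$, hence $S_1u\equiv 0$, and concludes via the equivalence noted after \eqref{eq:operator}. Your write-up supplies the supporting details the paper leaves implicit (the Leibniz rule on time scales, the vanishing of the boundary terms $h_m(\sigma(t),\sigma(t))$ for $m\geq 1$, and the componentwise translation back to \eqref{eq:main}), but the underlying argument is identical.
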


\begin{proof}
Consider taking $(n+1)^{\text{st}}$ derivative of equation \eqref{eq:S2-const} with respect to $t$. This immediately yields
\begin{align}
\frac{A}{T^{n+1}}S_1(u)(t) = 0, \quad t \in J, \notag
\end{align}
which implies $S_1(u)(t) = 0$ for $t \in J$. Therefore, $u$ is a solution to the problem \eqref{eq:main}.
\end{proof}
We now proceed to prove Theorem \ref{thm:one-sol}.
Let $Y_e$ denote the set of all equi-continuous families in $X$ with respect to the norm $\|\cdot\|$. Set $Y=Y_e$ and define
\begin{align}
U = \left\{u \in Y : \|u\| < B \text{ and if } \|u\| \geq \frac{B}{2}, \text{ then } u(0) > \frac{B}{2}\right\}. \notag
\end{align}

For $u \in U$, $\eta > 1$, and $t \in J$, define operators $T$ and $S$ as follows:
\begin{align}
Tu(t) &= \eta u(t), \notag \quad
Su(t) = u(t) - \eta u(t) - \eta S_2u(t). \notag
\end{align}

Now using Lemma \ref{lem:S2-bound}, we obtain, for all $u \in U$, 
\begin{align}
\|(I - S)u\| = \|\eta u + \eta S_2u\| \leq \eta\|u\| + \eta\|S_2u\| \leq \eta B + \eta AB_1. \notag
\end{align}
Thus, $S: U \to X$ is rd-continuous and $(I - S)(U)$ resides in a compact subset of $Y$.

Next, suppose there exists $u \in \partial U$ such that $u = \lambda(I - S)u$ or equivalently, $u = \lambda\eta(u + S_2u)$, for some $\lambda \in \left(0, \frac{1}{\eta}\right)$. Since $S_2u(0) = 0$ and $\|u\| = B > \frac{B}{2}$, we have $u(0) > \frac{B}{2}$, which gives $u(0) = \lambda\eta u(0)$. This implies $\lambda\eta = 1$, contradicting our assumption that $\lambda < \frac{1}{\eta}$. Therefore,
\begin{align}
\{u \in \partial U : u = \lambda_1(I - S)u\} = \emptyset \notag
\end{align}
for any $\lambda_1 \in \left(0, \frac{1}{\eta}\right)$ and by Proposition \ref{thm:fixed-point1}, the operator $T + S$ has a fixed point $u^* \in Y$ and $S_2u^*(t) = 0$ for $t \in J$. Indeed,
\begin{align}
u^*(t) &= Tu^*(t) + Su^*(t) \notag = \eta u^*(t) + u^*(t) - \eta u^*(t) - \eta S_2u^*(t) \notag = u^*(t) - \eta S_2u^*(t). \notag
\end{align}
Hence by Lemma \ref{lem:S2-sol}, $u^*$ is a solution to Problem \eqref{eq:main}. This completes the proof.

\section{Existence of at Least Three Nonnegative Solutions}
\label{three-solutions}
In this section, we establish Theorem \ref{thm:three-sol} by employing Proposition \ref{thm:fixed-point3}. This theorem significantly improves Theorem \ref{thm:one-sol}, as this theorem guarantees the existence of nonnegative solutions while simultaneously guarantee the non uniqueness in other other words multiplicity of solutions. Our approach involves defining the cone 
$P$, introducing the necessary operators, selecting suitable open bounded subsets within 
$P$, and systematically verifying each of the assumptions outlined in Proposition \ref{thm:fixed-point3}.

Let $P_e = \{u \in X : u \geq 0 \text{ on } J \}$ and $P$ denote the set of all equi-continuous families in $P_e$. For $v \in X$ and $m>0$, define operators
\begin{align}
\label{op:T1S3}
T_1v(t) &= (1 + m)v(t), \quad
S_3v(t) = - |S_2(v)(t)| - m v(t), 
\end{align}
where $t \in J$ and $S_2$ is defined by \eqref{eq:S2-def}. Observe that, since $m>0$, $T_1: P \to X$ is an expansive mapping. Moreover, any fixed point $v \in X$ of $T_1 + S_3$ is a solution to Problem \eqref{eq:main}.

Now define the sets
\begin{align}
\label{sets}
\Omega &:= P, \notag \\ 
U_1 &:= P_r = \{v \in P : \|v\| < r\}, \notag\\
U_2 &:= P_L = \{v \in P : \|v\| < L\}, \notag \\
U_3 &:= P_{R} = \{v \in P : \|v\| < R\}. 
\end{align}

Note that, since it is not required for \( \Omega \) to be a strict subset of \( P \) in order to apply Proposition~\ref{thm:one-sol}, it is sufficient to take \( \Omega \) to be the entire set \( P \). By the construction of the sets $U_1,~U_2,~U_3$, it is evident from Hypothesis {\bf (A2)} that $U_1 \subset U_2 \subset U_3$, $0 \in U_1$ and $(U_2 \setminus U_1) \cap \Omega \neq \emptyset$, $(U_3 \setminus U_2) \cap \Omega \neq \emptyset$, which follows from the fact $\Omega = P$. 
\begin{rem}{\rm
Observe that, $S_3(U_3)$ forms a bounded set of equicontinuous functions in the function space $X$. By the Arzela-Ascoli theorem, a subset of continuous functions is relatively compact if and only if it is both bounded and equicontinuous. Since $S_3(U_3)$ satisfies both conditions, it is relatively compact, which implies that the operator $S_3: U_3 \to X$ is completely continuous (compact).}
\end{rem}

\begin{rem}{\rm Note that, the relation $(I-T_1)(\Omega) = -(1+m)\Omega = -\Omega$ shows that applying the operator $(I-T_1)$ to the set $\Omega$ yields $-\Omega$. The condition $S_3 u \leq 0$ for any $u \in U_3$ ensures that the operator $S_3$ maps every element of $U_3$ to non-positive values. Since $S_3 u \leq 0$ and we are working in an ordered space, we have $S_3 u \in (-\Omega)$ for any $u \in U_3$. Combining the above, we obtain $S_3(U_3) \subset (-\Omega) = (I-T_1)(\Omega)$, establishing the desired inclusion. This type of inclusion argument is fundamental in degree theory and fixed point theorems, where demonstrating that one operator's range is contained within another operator's image is essential for establishing topological properties.}
\end{rem}

Now, let $\eta=\frac{2AB_1}{mr}$. Clearly, \( \eta > 0 \), and \( \eta \) can be made sufficiently small by choosing \( m \) large enough.
We now verify Assumptions (i) and (iii) outlined in Proposition \ref{thm:fixed-point3} in the subsequent two steps.

\subsection{Step 1:} { \it To show, $S_3x \neq (I - T_1)(\lambda x)$ for all $\lambda \geq 1 + \eta$, $x \in \partial U_1$, and $\lambda x \in \Omega$}. On the contrary, let us assume that there exist $\lambda_1 \geq 1 + \eta$, $u \in \partial U_1$, and $\lambda_1 u \in \Omega$ such that $S_3(u) = (I - T_1)(\lambda_1 u)$. Then $
- |S_2(u)| - m u = -m\lambda_1 u $ which implies $|S_2(u)|+m u = m\lambda_1 u.$ This gives us
\begin{align}
m\lambda_1 r=m\lambda_1\|u\|\leq \|S_2(u)\|+m\|u\|\leq AB_1+mr,\notag
\end{align}
whereupon $\lambda_1\leq 1+\frac{AB_1}{mr}$.
This  is a contradiction.  Thus, assumption (i) of Proposition \ref{thm:fixed-point3} is satisfied.

\subsection{Step 2:}{\it To show, $S_3x \neq (I - T_1)(\lambda x)$ for all $\lambda \geq 1 + \eta$, $x \in \partial U_3$, and $\lambda x \in \Omega$.} We once again use a proof by contradiction, as in \textbf{Step 1}.
Note that, if there exist $\lambda_1 \geq 1 + \eta$, $u \in \partial U_3$, and $\lambda_1 u \in \Omega$ such that $S_3(u) = (I - T_1)(\lambda_1 u)$, we readily obtain
\begin{align}
m\lambda_1 R=m\lambda_1\|u\|\leq \|S_2(u)\|+m\|u\|\leq AB_1+mR,\notag
\end{align}
whereupon $\lambda_1\leq 1+\frac{AB_1}{mR}\leq 1+\frac{AB_1}{mr}$,
which is again a contradiction. Therefore, assumption (iii) of Proposition \ref{thm:fixed-point3} is satisfied.

\vspace{0.1 in}

\noindent Finally, we verify assumption (ii) of Proposition \ref{thm:fixed-point3}. For that, let $u_0 \in P^*$, where $P^*$ is the dual space of $P$.
\subsection{Step 3:} {\it To show, $S_3x \neq (I - T_1)(x - \lambda u_0)$ for all $\lambda \geq 0$ and $x \in \partial U_2 \cap (\Omega + \lambda w_0)$.} On the contrary, let us assume there exist $\lambda_1 \geq 0$ and $u \in \partial P_L \cap (\Omega + \lambda_1 u_0)$ such that $S_3(u) = (I - T_1)(u - \lambda_1 u_0)$. Then
$- |S_2(u)| - m u  = -m(u - \lambda_1 u_0)$ which implies $-|S_2(u)| = \lambda_1 mu_0 $, which is a contradiction.

\vspace{0.1 in}

\noindent Consequently, by Proposition \ref{thm:fixed-point3}, Problem \eqref{eq:main} has at least three nonnegative solutions $u_1$, $u_2$, and $u_3$ satisfying either
\begin{align}
u_1 \in \partial U_1 \cap \Omega, \quad u_2 \in (U_2 \setminus U_1) \cap \Omega, \quad \text{and} \quad u_3 \in (U_3 \setminus U_2) \cap \Omega, \notag
\end{align}
or
\begin{align}
u_1 \in U_1 \cap \Omega, \quad u_2 \in (U_2 \setminus U_1) \cap \Omega, \quad \text{and} \quad u_3 \in (U_3 \setminus U_2) \cap \Omega. \notag
\end{align}

\section{An Example}
\label{example}

We conclude this paper by presenting an example to illustrate our theoretical results. Let $\mathbb{T} = 4\mathbb{N}_0 \cup \{0\}$, $T = 256$, $n = 2$, and define
\begin{align}
a_{01}(t) &= \frac{1}{2}, \quad a_{11}(t) = \frac{1}{3}, \quad a_{21}(t) = \frac{1}{4}, \notag\\
b_0(t) &= 1, \quad b_1(t) = \frac{1}{100}, \quad b_2(t) = \frac{1}{10}, \quad t \in [0, 256], \notag
\end{align}
with $k_1 = k_2 = l_1 = 1$, $p_1 = l_2 = 2$, and
\begin{align}
f(t, x, y) &= 1 + \frac{x}{100(1 + x^2)} + \frac{y}{10(1 + y^2 + y^4)}, \quad t \in [0, 256], \quad x, y \in \mathbb{R}, \notag\\
g_1(x) &= \frac{1}{2} + \frac{x}{3(1 + x^2 + x^4)} + \frac{x^2}{4(1 + x^2)}, \quad x \in \mathbb{R}. \notag
\end{align}

We choose the parameters $R = 10$, $L = 5$, $r = 4$, $m = 1050$, $B = 1$, $A = \frac{1}{10B_1}$. With $B_1 = 4$, we verify that
\begin{align}
r<L<R.
\end{align}

Therefore, assumptions {\bf (A1)} and  {\bf (A2)} are satisfied for the problem
\begin{align}
\label{eq:example}
\Delta^2 u\left(\frac{t}{4}\right) + 1 + \frac{u\left(\frac{t}{4}\right)}{100\left(1 + u\left(\frac{t}{4}\right)^2\right)} + \frac{\Delta u\left(\frac{t}{4}\right)}{10\left(1 + \Delta u\left(\frac{t}{4}\right)^2 + \left(\Delta u\left(\frac{t}{4}\right)\right)^4\right)} &= 0, \quad t \in [4, 256), \notag\\
\Delta u(0) = \frac{1}{2} + \frac{u(0)}{3(1 + (u(0))^2 + (u(0))^4)} + \frac{(u(0))^2}{4(1 + (u(0))^2)}&, \notag\\
u(1) = u(256) &= 0. 
\end{align}
and by applying Theorems \ref{thm:one-sol} and \ref{thm:three-sol}, one can conclude that Problem \eqref{eq:example} has at least one solution and at least three nonnegative solutions, respectively.

\end{document}